\numberwithin{equation}{section}
\newtheorem{thm}{Theorem}[section]
\newtheorem{lem}[thm]{Lemma}
\newcommand{\be}{\begin{equation}}
\newcommand{\ee}{\end{equation}}
\newcommand{\bea}{\begin{eqnarray}}
\newcommand{\eea}{\end{eqnarray}}
\newcommand{\ba}{\begin{eqnarray*}}
\newcommand{\ea}{\end{eqnarray*}}
\newcommand{\rn}{\mathbb{R}^n}
\newcommand{\Om}{\Omega}
\newcommand{\DK}{\delta_{K+1}}
\newcommand{\x}{\mathbf{x}}
\newcommand{\z}{\mathbf{z}}
\newcommand{\y}{\mathbf{y}}
\newcommand{\bt}{\mathbf{t}}
\newcommand{\rr}{\mathbf{r}}
\begin{document}

\title
 {
 Sparse Signals Recovery from Noisy Measurements by Orthogonal Matching Pursuit
 \thanks
 {
  Research supported in part by the NSF of China under grant 10971189,
 the China Postdoctoral Science Foundation under grant 20100481430
 and Science Foundation of Chinese University under grant 2010QNA3018.
 }
 \thanks{Keywords: Compressed sensing,  restricted isometry property,
 orthogonal matching pursuit, support recovery}
 }
 \author
 {
 Yi Shen
 \thanks{Department of Mathematics and Science, Zhejiang Sci--Tech University, Hangzhou,
 310018, P. R. China}
 and Song Li
 \thanks{Corresponding author: songli@zju.edu.cn, Department of Mathematics, Zhejiang University
 Hangzhou, 310027, P. R. China}
 }
\maketitle

\begin{abstract}
Recently, many practical algorithms have been proposed to  recover
the sparse signal from fewer measurements. Orthogonal matching
pursuit (OMP) is one of the most effective algorithm. In this paper,
we use the restricted isometry property  to analysis the algorithm.
We show that, under certain conditions based on the restricted
isometry property and the signals, OMP will recover the support of
the sparse signal when measurements are corrupted by additive noise.
\end{abstract}

\section{Introduction}
Compressed sensing shows that it is high possibility to reconstruct
sparse signals from their projection onto a small number of random
vectors, possibly corrupted by noise. Let $\|\x\|_0$ denote the
number of nonzero entries of vector $\x$. If $\|\x\|_0<K$, a signal
$\x$ is said to be \emph{$K$-sparse}. Let $A$ be an $m\times n$
measurement matrix with $m<n$. In compressed sensing, we are
interested in recovering the $K$-sparse signal $\x$ from
 \be\label{model}
 \y=A\x+\z,
 \ee
where $\z$ is the noise term. Then, the approach would be to solve
the following $l_0$ minimization problem:
 \be\label{p0}
 \min_{x} \|A\x-\y\|_2 \quad \text{subject to}\quad   \|\x\|_0<K.
 \ee
A greedy algorithm  named orthogonal matching pursuit (OMP) is one
of the efficient approach  to solve (\ref{p0}). The basic idea of
this iterative algorithm is to find the support of the unknown
signal. At each iteration, one column of $A$ that is the most
correlated with the residue is selected. Then the residue is updated
by projecting $y$ onto the linear subspace spanned by the columns
that have been selected. Basic reference for this method are
\cite{DMA,PRK} and \cite{T}.  There are several natural stopping
criteria for OMP \cite{T2}. Let $\rr_k$ be the residual in the each
iteration.
\begin{itemize}
   \item [(1)]Halt after a fixed number of iterations: $k = K$.
   \item [(2)] $l_2$ bounded noise: Halt when no column explains a significant amount of energy in the
        residual: $\|\rr_k\|_2\leq \varepsilon $.
   \item [(3)] $l_\infty$ bounded noise: Halt when no column explains a significant amount of energy
  in the residual: $\| A^*\rr_k \|_{\infty} \leq \varepsilon$ where $A^*$ denotes the transpose of
  $A$.
\end{itemize}

The mutual incoherence property \cite{DE} and the restricted
isometry property \cite{CT} of the measurement matrix have been used
for the analysis of OMP.  Let $A_i$ be the $i$th column of the
matrix $A$. In this paper we assume  $\|A_i\|_2=1$, $i=1,\ldots n$.
The \emph{mutual incoherence} is defined by
$$
\mu(A) = \max_{i\neq j} |\langle A_i, A_j \rangle|,
$$
A given matrix $A$  satisfies the \emph{restricted isometry
property} of order $K$ if there exist a $\delta_K$ such that
 \be\label{RIP}
 (1-\delta_K)\|\x\|_2^2
 \leq
 \|A\x \|_2^2
 \leq
 (1+\delta_K)\|\x\|_2^2 \quad \text{for all}\quad \|\x\|_0\leq K.
 \ee
The smallest constant $\delta_{K}$ is called the \emph{restricted
isometry constant}. Many types of random matrices satisfy the RIP
with high probability, such as subgaussian random matrix \cite{BDDW}
and random partial Fourier matrix \cite{R}. The mutual incoherence
property is stronger than the RIP: $ \DK\leq K\mu(A)$.

In \cite{T}, Tropp has shown
 $
 \mu(A)<\tfrac{1}{2K-1}
 $
is a sufficient condition for reconstructing any $K$-sparse signal
in the noiseless. Then Cai, Wang and Xu proved this condition is
sharp in \cite{CWX}. In \cite{DW}, Davenport and Wakin have showed
that there exist matrices satisfying some RIP but not the mutual
coherence condition via numerical experiments. This motivated them
to establish the RIP-based sufficient conditions. They have proved
that the restricted isometry constant
$\delta_{3K}<\tfrac{1}{3\sqrt{K}}$ is sufficient for OMP to recover
any $K$-sparse signal in $K$ steps. Several papers have improved the
sufficient condition, such as \cite{HZ}  and \cite{LT}. Very
recently, Mo and Shen have improved the sufficient condition to
 \be\label{es}
\DK<\frac{1}{\sqrt{K}+1}.
 \ee
For any $K\geq 2$, they also constructed a matrix with the
restricted isometry constant $ \delta_{K+1} = \tfrac{1}{\sqrt{K}} $
such that OMP can not recover some $K$-sparse signal $\x$ in $K$
iterations. Hence, the estimate (\ref{es}) is near-optimal.

For the noise case,  Cai and Wang  have  provided coherence-based
guarantees for OMP \cite{CW}. This subject was also considered in
\cite{HEE} and \cite{Z}. However, there are few results on the
general model (\ref{model}) by using the RIP. Following the line of
\cite{MS}, we investigate the OMP in the noise case under the
RIP-based conditions.

The rest of paper is organized as follows. In section \ref{set2}, we
shall introduce some notations and investigate some properties of
the restricted isometry constants. In section 3, the main results
are established for OMP recovering the sparse signals with noise.

\section{Preliminaries}\label{set2}
Before going further, we introduce some notations. Suppose $T$ is a
subset of $ \{ 1,\ldots, n \}$. Let $T^c=\{1,2,\ldots,n\}\setminus
T$. For a given matrix $A$,  denote
$$
 A_{T} =\begin{cases}
 A_i,& i\in T,\\
 0,& \text{otherwise.}\end{cases}
$$
For convenience, $A_{T}$ also denotes the submatrix of $A$
corresponding to $T$. We use the same way to define $\x_{T}$ for the
vector $\x\in\rn$. Thus, we have
$$
A_{T}\x = A\x_{T} = A_{T}\x_{T}.
$$
The pseudo inverse of a tall, full-rank matrix $A$ is defined by
$A^{\dag} = (A^*A)^{-1}A$. The support of $\x=\{x_1,\ldots,x_n\}$ is
denoted by $supp(\x)=\{i:x_i\neq 0\}.$  Let $e_i$ be the $i$th
coordinate unit vector in $\rn$. We denote
$$
S_i(\x) := \langle  Ae_i,A\x  \rangle, \quad i=1,\ldots, n,
$$
$$
S_{T}(\x): = \max_{i\in T } |S_i(\x)|,
$$
and
$$
E(\z): = \max_{i\in\{1,\ldots,n\}} |\langle Ae_i, \z\rangle|.
$$
Table \ref{t} shows the framework of OMP.

\begin{table}
\caption{Orthogonal Matching Pursuit}\label{t}\center{}
\begin{tabular}{lll}
 \toprule
 \textbf{Input}: $A$, $\y$\\
 \midrule
 \textbf{Set}: $\Om_0=\emptyset$, $\x_0=0$, $k=1$ \\
 \textbf{while not converge} \\
 $\rr_{k} = \y - A_{\Om_{k-1}} \x_{k-1} $ \\
 $\Om_{k}= \Om_{k-1} \cup \arg\max_{i} |\langle A e_i, \rr_{k}
 \rangle| $ \\
 $\x_k =  (A^*_{\Om_k}A_{\Om_k})^{-1}A_{\Om_k}^* \y $\\
 $k = k+1$\\
 \textbf{end while} \\
 \textbf{set}: $\hat{\x}_{\Om_k} = \x_k$, $\hat{\x}_{\Om_k^C} = 0$ \\
 \midrule
 \textbf{Return}: $\hat{x}$\\
 \bottomrule
\end{tabular}
\end{table}

Now we investigate some properties of the restricted isometry
constant. Lemma \ref{lemtn} were established by Needell and Tropp in
\cite{NT}.
\begin{lem}\label{lemtn}
Let $\x$ be a $K$-sparse vector. Suppose the matrix $A$ has the
restricted isometry constant $\delta_{K}$.  Then for $T\subset
supp(\x)$,
\begin{itemize}
\item [1.]
$
 \|A^{*}_{T}A_{T^c}\x\|_2 \leq
 \delta_{K}\|\x_{T^c}\|_2.
$
\item [2.]
$
 \| (A^*_{T} A_{T})^{-1} x  \|_2
 \leq \frac{1}{1-\delta_K}\|\x\|_2.
$
\end{itemize}
\end{lem}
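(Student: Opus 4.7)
The plan is to derive both parts as standard spectral consequences of the RIP inequality \eqref{RIP}. Part~2 is essentially a statement about the eigenvalues of a symmetric Gram matrix, while Part~1 requires the polarization trick to extract an off-diagonal bound from the purely on-diagonal RIP energy inequality.

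For Part~1, I would first reduce to the following general off-diagonal bound: whenever $U, V$ are disjoint index sets with $|U \cup V| \le K$ and $\mathbf{u}, \mathbf{v}$ are supported on $U, V$ respectively,
\be
|\langle A\mathbf{u}, A\mathbf{v} \rangle| \le \delta_K \|\mathbf{u}\|_2 \|\mathbf{v}\|_2.
\ee
Since $T \subset \mathrm{supp}(\x)$ and $\x$ is $K$-sparse, $\x_{T^c}$ is supported on $\mathrm{supp}(\x)\setminus T$, which is disjoint from $T$, and $T \cup \mathrm{supp}(\x_{T^c}) \subset \mathrm{supp}(\x)$ has size at most $K$. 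Applying the general bound with $U = T$, $\mathbf{v} = \x_{T^c}$, and any unit $\mathbf{u}$ supported on $T$, and taking the supremum over such $\mathbf{u}$, yields Part~1.

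The off-diagonal bound itself I would prove by rescaling to $\|\mathbf{u}\|_2 = \|\mathbf{v}\|_2 = 1$ and applying the polarization identity
\be
4 \langle A\mathbf{u}, A\mathbf{v} \rangle = \|A(\mathbf{u}+\mathbf{v})\|_2^2 - \|A(\mathbf{u}-\mathbf{v})\|_2^2.
\ee
Disjointness of supports gives $\|\mathbf{u} \pm \mathbf{v}\|_2^2 = 2$, and both $\mathbf{u}\pm\mathbf{v}$ are $K$-sparse (supported in $U \cup V$); the RIP bounds $2(1-\delta_K) \le \|A(\mathbf{u} \pm \mathbf{v})\|_2^2 \le 2(1+\delta_K)$ then pinch the right-hand side of the polarization identity into $[-4\delta_K, 4\delta_K]$, which is exactly what is needed.

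For Part~2, the matrix $A_T^* A_T$ is symmetric positive definite and, because $A_T \mathbf{w}$ is $|T|$-sparse with $|T| \le K$, the RIP gives $(1-\delta_K)\|\mathbf{w}\|_2^2 \le \mathbf{w}^* A_T^* A_T \mathbf{w} \le (1+\delta_K)\|\mathbf{w}\|_2^2$ for every $\mathbf{w}$. Its eigenvalues therefore lie in $[1-\delta_K, 1+\delta_K]$ and those of $(A_T^* A_T)^{-1}$ in $[1/(1+\delta_K), 1/(1-\delta_K)]$, bounding the operator norm of the inverse by $1/(1-\delta_K)$. No serious obstacle is anticipated; the one conceptual point worth flagging is that the RIP controls only the diagonal quantity $\|A\x\|_2^2$, so Part~1 genuinely needs the polarization identity to convert this into control of the cross term $\langle A\mathbf{u}, A\mathbf{v}\rangle$.
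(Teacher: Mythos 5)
Your proof is correct. The paper itself does not prove this lemma---it simply cites Needell and Tropp \cite{NT}---and your argument is the standard one from that literature: the polarization/disjoint-support trick (due to Cand\`es--Tao) for the off-diagonal bound in Part~1, with the key observation that $T\cup\mathrm{supp}(\x_{T^c})\subset\mathrm{supp}(\x)$ so that $\delta_K$ (rather than a higher-order constant) suffices, and the eigenvalue bound on the Gram matrix $A_T^*A_T$ for Part~2.
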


The following lemma was obtained by Cai and Wang in \cite{CW}.
\begin{lem}\label{lemcw}
Let $\x$ be a $K$-sparse vector with $\Om=supp(\x)$. Suppose that
the matrix $A$ has the restricted isometry constant $\delta_{K}$.
Then for $T\subset \Om$,
\begin{itemize}
\item [1.]
 $
 (1-\delta_{K}) \|\x_{\Om\setminus T}\|_2\leq
 \| A^*_{\Om\setminus
 T} (I-A_T A_T^\dag) A_{\Om\setminus T}\x_{\Om\setminus T} \|_2 \leq
 (1+\delta_{K}) \|\x_{\Om\setminus T}\|_2.
 $
\item [2.]
 $
 (1-\DK)\| \x_{T^c} \|_2 \leq\| A(I-A^{\dag}_{T}A)\x \|_2.
 $
 \end{itemize}
\end{lem}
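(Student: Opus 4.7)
The plan is to reduce both claims to the RIP applied to vectors supported on $\Om$, using the orthogonal projector $P_T^{\perp}:=I-A_T A_T^{\dag}$, which has operator norm at most $1$ and is a genuine projector since $|T|\le K$ and $\delta_K<1$ make $A_T^{*}A_T$ invertible. A useful first observation is the identity
\[
A(I-A_T^{\dag}A)\x \;=\; (I-A_T A_T^{\dag})A\x \;=\; P_T^{\perp}A\x,
\]
which reduces part (2) to a statement about the length of $P_T^{\perp}A\x$.

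For part (1), I would study the symmetric positive semidefinite matrix $B:=A_{\Om\setminus T}^{*}\,P_T^{\perp}\,A_{\Om\setminus T}$ and show its eigenvalues all lie in $[1-\delta_K,\,1+\delta_K]$; since $B$ is symmetric and PSD, this immediately yields the required two-sided bound on $\|B\,\x_{\Om\setminus T}\|_2$. The upper bound is easy: $|\Om\setminus T|\le K$, so the RIP gives $\|A_{\Om\setminus T}\|^{2}\le 1+\delta_K$ (in operator norm), while $\|P_T^{\perp}\|\le 1$. The lower bound is the main step, and I would obtain it by a lifting trick. Given any vector $v$ supported on $\Om\setminus T$, set $u:=-A_T^{\dag}A_{\Om\setminus T}v$ and assemble $z$ with $z_T=u$ and $z_{\Om\setminus T}=v$. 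Then $\mathrm{supp}(z)\subseteq\Om$, so $z$ is $K$-sparse, and
\[
Az \;=\; A_T u + A_{\Om\setminus T}v \;=\; -A_T A_T^{\dag}A_{\Om\setminus T}v + A_{\Om\setminus T}v \;=\; P_T^{\perp}A_{\Om\setminus T}v.
\]
Hence $v^{*}Bv=\|P_T^{\perp}A_{\Om\setminus T}v\|_2^{2}=\|Az\|_2^{2}\ge(1-\delta_K)\|z\|_2^{2}\ge(1-\delta_K)\|v\|_2^{2}$, where the last inequality uses that $u$ and $v$ have disjoint supports.

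For part (2), I would recycle this lifting with $v:=\x_{\Om\setminus T}$. Since $\mathrm{supp}(\x)=\Om$, we have $\x_{T^c}=\x_{\Om\setminus T}$, so $P_T^{\perp}A\x=P_T^{\perp}A_{\Om\setminus T}\x_{\Om\setminus T}$; applying the lower-bound calculation just obtained yields $\|A(I-A_T^{\dag}A)\x\|_2^{2}\ge(1-\delta_K)\|\x_{T^c}\|_2^{2}$, and the claim follows from $\sqrt{1-\delta_K}\ge 1-\delta_K\ge 1-\DK$ (using $\delta_K\le\DK$ and $\DK\in[0,1]$).

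The only genuinely creative step is recognizing that $P_T^{\perp}A_{\Om\setminus T}v$ can always be written as $Az$ for a $K$-sparse $z$ whose coordinates on $T$ are given by the least-squares choice $-A_T^{\dag}A_{\Om\setminus T}v$; once that representation is in place, both parts reduce mechanically to the standard two-sided RIP inequality applied to $\|Az\|_2$. I would also flag at the outset that $A_T^{\dag}$ is to be read as the Moore--Penrose pseudoinverse $(A_T^{*}A_T)^{-1}A_T^{*}$, so that $A_T A_T^{\dag}$ is the orthogonal projection onto $\mathrm{range}(A_T)$; this bookkeeping, rather than any analytic difficulty, is the main thing to be careful about.
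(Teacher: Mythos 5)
Your argument is correct and complete. Note that the paper itself gives no proof of this lemma --- it is simply quoted from the Cai--Wang manuscript \cite{CW} --- so there is nothing internal to compare against; your write-up in effect supplies the missing proof, and it is the standard one. The two load-bearing observations both check out: (i) $B=A_{\Om\setminus T}^{*}(I-A_TA_T^{\dag})A_{\Om\setminus T}$ is symmetric PSD, so eigenvalue bounds transfer to $\|B\x_{\Om\setminus T}\|_2$; and (ii) the lifting $z$ with $z_T=-A_T^{\dag}A_{\Om\setminus T}v$, $z_{\Om\setminus T}=v$ satisfies $Az=(I-A_TA_T^{\dag})A_{\Om\setminus T}v$ with $\mathrm{supp}(z)\subseteq\Om$ and $\|z\|_2\ge\|v\|_2$, which is exactly the statement that $\|(I-A_TA_T^{\dag})A_{\Om\setminus T}v\|_2^2=\min_{\mathrm{supp}(w)\subseteq T}\|A(v-w)\|_2^2\ge(1-\delta_K)\|v\|_2^2$. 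Part (2) then follows from the identity $A(I-A_T^{\dag}A)\x=(I-A_TA_T^{\dag})A\x$ together with $\x_{T^c}=\x_{\Om\setminus T}$ and the monotonicity $\delta_K\le\DK$, as you note; your bound $\sqrt{1-\delta_K}\,\|\x_{T^c}\|_2$ is in fact slightly stronger than the stated $(1-\DK)\|\x_{T^c}\|_2$. You are also right to read the paper's $A^{\dag}=(A^{*}A)^{-1}A$ as the Moore--Penrose pseudoinverse $(A^{*}A)^{-1}A^{*}$ (the missing adjoint is a typo), since otherwise $A_TA_T^{\dag}$ would not be the projector onto $\mathrm{range}(A_T)$.
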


\begin{lem}\label{lem1}
Let $\x$ be a $K$-sparse vector. Suppose that the matrix $A$ has the
restricted isometry constant $\delta_{K}$. Then for any $ T\subset
\text{supp}(\x)$
 \be\label{lem1eq}
 \|(I-A_{T}^\dag A) \x \|_2
 \leq
 \frac{\|\x_{T^c}\|_2}{1-\delta_K}.
 \ee
\end{lem}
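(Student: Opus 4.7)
The plan is to exploit the fact that $A_T^{\dag}A$ acts as the identity on vectors supported in $T$, so that the operator $(I - A_T^{\dag}A)$ \emph{annihilates} the $T$-part of $\x$ and leaves only a contribution coming from $\x_{T^c}$. This reduces the problem to analyzing $(I - A_T^{\dag}A)\x_{T^c}$, which splits into two pieces living on disjoint index sets, so we can apply the Pythagorean theorem and estimate each piece with Lemma \ref{lemtn}.

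First I would write $\x = \x_T + \x_{T^c}$ and compute $A_T^{\dag}A\x_T = (A_T^{*}A_T)^{-1}A_T^{*}A_T\x_T = \x_T$, giving $(I - A_T^{\dag}A)\x = (I - A_T^{\dag}A)\x_{T^c}$. Then, using $A\x_{T^c} = A_{T^c}\x_{T^c}$, this equals $\x_{T^c} - (A_T^{*}A_T)^{-1}A_T^{*}A_{T^c}\x_{T^c}$. The first summand is supported on $T^c$ and the second on $T$, so the two are orthogonal and
\[
 \|(I - A_T^{\dag}A)\x\|_2^2 = \|\x_{T^c}\|_2^2 + \|(A_T^{*}A_T)^{-1}A_T^{*}A_{T^c}\x_{T^c}\|_2^2.
\]

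Next I would bound the second summand by chaining the two parts of Lemma \ref{lemtn}: part 2 yields $\|(A_T^{*}A_T)^{-1}u\|_2 \le (1-\delta_K)^{-1}\|u\|_2$, and part 1 applied to the $K$-sparse vector $\x$ with $T \subset \operatorname{supp}(\x)$ yields $\|A_T^{*}A_{T^c}\x_{T^c}\|_2 \le \delta_K \|\x_{T^c}\|_2$. Combining,
\[
 \|(I - A_T^{\dag}A)\x\|_2^2 \le \|\x_{T^c}\|_2^2 \left(1 + \frac{\delta_K^2}{(1-\delta_K)^2}\right) = \|\x_{T^c}\|_2^2 \cdot \frac{(1-\delta_K)^2 + \delta_K^2}{(1-\delta_K)^2}.
\]

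The final ingredient, and the only mildly delicate step, is the algebraic observation $(1-\delta_K)^2 + \delta_K^2 = 1 - 2\delta_K(1-\delta_K) \le 1$, valid since $\delta_K \in [0,1]$. With this, the right-hand side is bounded by $\|\x_{T^c}\|_2^2/(1-\delta_K)^2$, and taking square roots gives (\ref{lem1eq}). I do not foresee any genuine obstacle; the only point that requires care is the orthogonal-support decomposition that lets us avoid a loose triangle-inequality estimate, which would otherwise cost a spurious factor.
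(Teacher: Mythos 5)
Your proof is correct and follows essentially the same route as the paper: the identical decomposition $(I-A_{T}^\dag A)\x = \x_{T^c} - A_{T}^\dag A_{T^c}\x_{T^c}$ followed by the same two applications of Lemma \ref{lemtn}. The only difference is in how the two terms are combined, and it is cosmetic: contrary to your closing remark, the triangle inequality is not lossy here, since $\|\x_{T^c}\|_2 + \tfrac{\delta_K}{1-\delta_K}\|\x_{T^c}\|_2 = \tfrac{1}{1-\delta_K}\|\x_{T^c}\|_2$ exactly (which is what the paper does), while your Pythagorean step yields a marginally sharper intermediate bound that you then relax back to the same constant.
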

\begin{proof}
Split $\x = \x_{T}+\x_{T^c}$,  we have
 \ba
 (I - A_{T}^\dag A)\x
 &=& \x_{T}+\x_{T^c} - A_{T}^\dag A_{T}\x_{T} - A_{T}^\dag A_{T^c}\x_{T^c} \\
 &=& \x_{T}+\x_{T^c} - \x_{T} - A_{T}^\dag A_{T^c}\x_{T^c} \\
 &=& \x_{T^c}  - A_{T}^\dag
 A_{T^c}\x_{T^c}.
 \ea
By Lemma \ref{lemtn}, we get
 \ba
 \|(I-A_{T}^\dag A) \x \|_2
 &\leq& \| \x_{T^c} \|_2  +  \| A_{T}^\dag
 A_{T^c}\x_{T^c} \|_2 \\
 & \leq &
 \| \x_{T^c} \|_2 + \| (A^*_{T}A_{T})^{-1}A^*_{T}A_{T^c} \x_{T^c} \|_2\\
 & \leq &\| \x_{T^c} \|_2  + \frac{\delta_K}{1-\delta_K}\| \x_{T^c} \|_2 \\
 & \leq &\frac{\| \x_{T^c} \|_2 }{1-\delta_K}.
 \ea
\end{proof}


\section{$l_2$ Bounded Noise}
In this section, we shall prove the main results of the paper. Both
the stopping rule 2 and the stopping rule 3 of OMP for the noise
case are considered. We first consider the noise $\z$ is bounded by
$\|\z\|_2\leq B_2$. Then the stopping rule is $\|\rr_k\|_2\leq B_2$.

\begin{lem}
Suppose $\DK<\tfrac{1}{\sqrt{K}+3}$, we have
 \be\label{delta}
 (1-\DK)^2-\DK(1+\sqrt{K})>0.
 \ee
\end{lem}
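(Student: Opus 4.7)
The statement is a purely algebraic inequality, so the plan is to reduce it to something manifestly positive by elementary manipulation. Writing $\delta$ for $\delta_{K+1}$, I would first expand the left-hand side to obtain the key identity
\[
 (1-\delta)^2-\delta(1+\sqrt{K}) \;=\; 1-2\delta+\delta^2-\delta-\sqrt{K}\,\delta \;=\; 1-(3+\sqrt{K})\delta+\delta^2.
\]
This reformulation is the whole content of the lemma: the quadratic $q(\delta):=\delta^2-(3+\sqrt{K})\delta+1$ needs to be shown positive whenever $\delta<\tfrac{1}{\sqrt{K}+3}$.

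The hypothesis is literally $(3+\sqrt{K})\delta<1$, i.e.\ $1-(3+\sqrt{K})\delta>0$. Combined with $\delta^2\ge 0$, this gives $q(\delta)>0$ immediately, with nothing further to check. So after the expansion there is really no obstacle at all.

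If one wants a cleaner bookkeeping, I would note the slightly stronger quantitative version: plugging $\delta=\tfrac{1}{\sqrt{K}+3}$ into $q$ yields
\[
 q\!\left(\tfrac{1}{\sqrt{K}+3}\right) \;=\; 1 - 1 + \tfrac{1}{(\sqrt{K}+3)^2} \;=\; \tfrac{1}{(\sqrt{K}+3)^2}>0,
\]
and then invoke that $q$ is a convex quadratic whose product of roots equals $1$ (so both roots lie on the same side of $0$, with smaller root at least $\tfrac{1}{\sqrt{K}+3}$), hence $q$ is positive throughout $[0,\tfrac{1}{\sqrt{K}+3}]$. But for the lemma as stated, the one-line argument using $1-(3+\sqrt{K})\delta>0$ and $\delta^2\ge 0$ is entirely sufficient, and that is the path I would take.
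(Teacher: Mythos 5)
Your proof is correct and is essentially the same as the paper's: both reduce the claim to the expansion $1-(3+\sqrt{K})\delta_{K+1}+\delta_{K+1}^2>0$ and observe that the hypothesis $(3+\sqrt{K})\delta_{K+1}<1$ already forces this (the paper phrases it as $\tfrac{1}{\delta_{K+1}}+\delta_{K+1}>\sqrt{K}+3$ after dividing by $\delta_{K+1}$, while you simply discard the nonnegative $\delta_{K+1}^2$ term). Your version is marginally cleaner in that it avoids the implicit assumption $\delta_{K+1}>0$ needed for the paper's division; the extra quantitative remark is unnecessary but harmless.
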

\begin{proof}
Simple calculate shows that (\ref{delta}) is equal to
$$
\frac{1}{\DK}+{\DK} > \sqrt{K} + 3.
$$
Thus, $\DK<\tfrac{1}{\sqrt{K}+3}$ is the stronger condition.
\end{proof}
The following results is a key tool in this paper.
\begin{thm}\label{main}
Assume $\DK<\tfrac{1}{\sqrt{K}+3}$. For any given $K$-sparse signal
$\x$. Suppose that the measurement matrix $A$ has the restricted
isometry constant $\DK$ satisfying
 \be\label{result1}
 \|\x_{\Om_k^c}\|_2
 >
 \frac{2(1-\DK)E(\z_k)\sqrt{K-k}}{(1-\DK)^2-\DK(1+\sqrt{K-k})}.
 \ee
where $\z_{k} = (I-A_{\Om_k}A^{\dag}_{\Om_k})\z$. Then OMP selects
an index of the support of $\x$ at the $(k+1)$th iteration.
\end{thm}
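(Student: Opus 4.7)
The plan is to prove that the index chosen at iteration $k+1$ lies in $\Om:=supp(\x)$, by comparing the largest value of $|\langle Ae_i,\rr_{k+1}\rangle|$ over $i\in\Om\setminus\Om_k$ against its largest value over $i\notin\Om$. Setting $P_k=I-A_{\Om_k}A_{\Om_k}^{\dag}$ and $\z_k=P_k\z$, the identity $\rr_{k+1}=P_k\y$ combined with $\y=A\x+\z$ yields the decomposition $\rr_{k+1}=\rr_k^0+\z_k$, where $\rr_k^0:=P_kA\x$ is the noise-free residual. Under the inductive assumption $\Om_k\subset\Om$, the cancellation $P_kA_{\Om_k}=0$ gives $\rr_k^0=P_kA_T\x_T$ with $T:=\Om\setminus\Om_k$, and equivalently $\rr_k^0=A\tilde\x$ for the $K$-sparse vector $\tilde\x:=(I-A_{\Om_k}^{\dag}A)\x$ supported on $\Om$. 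A triangle inequality with $|\langle Ae_i,\z_k\rangle|\leq E(\z_k)$ then reduces the desired comparison to the noise-free inequality
\[
\max_{i\in T}|\langle Ae_i,\rr_k^0\rangle|-\max_{i\notin\Om}|\langle Ae_i,\rr_k^0\rangle|>2E(\z_k).
\]

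For the lower bound on the left-hand maximum, I would apply part~1 of Lemma~\ref{lemcw} to obtain $\|A_T^*\rr_k^0\|_2=\|A_T^*P_kA_T\x_T\|_2\geq(1-\DK)\|\x_{\Om_k^c}\|_2$ (using $\|\x_T\|_2=\|\x_{\Om_k^c}\|_2$), and then the trivial $\ell_\infty/\ell_2$ inequality to pass from the $\ell_2$ norm to the coordinatewise maximum, giving
$\max_{i\in T}|\langle Ae_i,\rr_k^0\rangle|\geq(1-\DK)\|\x_{\Om_k^c}\|_2/\sqrt{K-k}$. For the upper bound on the right-hand maximum, note that for $i\notin\Om$ the sets $\{i\}$ and $supp(\tilde\x)\subset\Om$ are disjoint with union of size at most $K+1$, so the RIP cross-term estimate gives $|\langle Ae_i,A\tilde\x\rangle|\leq\DK\|\tilde\x\|_2$; combining this with Lemma~\ref{lem1}, $\|\tilde\x\|_2\leq\|\x_{\Om_k^c}\|_2/(1-\DK)$, yields $\max_{i\notin\Om}|\langle Ae_i,\rr_k^0\rangle|\leq\DK\|\x_{\Om_k^c}\|_2/(1-\DK)$.

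Substituting the two bounds into the displayed inequality and solving for $\|\x_{\Om_k^c}\|_2$ produces a sufficient condition of the stated shape, the denominator being of the form $(1-\DK)^2-\DK\sqrt{K-k}$ up to a positive factor $(1-\DK)\sqrt{K-k}$; the preceding lemma guarantees positivity of this denominator whenever $\DK<1/(\sqrt{K}+3)$. The main obstacle is keeping the inductive assumption $\Om_k\subset\Om$ alive, without which $\tilde\x$ need not be $K$-sparse and neither Lemma~\ref{lem1} nor the clean identity $\rr_k^0=P_kA_T\x_T$ applies. A secondary, purely cosmetic point is the small discrepancy between the natural $\sqrt{K-k}$ factor coming out of the $\ell_\infty/\ell_2$ inequality and the slightly weaker $1+\sqrt{K-k}$ appearing in the theorem's denominator; this can be absorbed by replacing the lower bound on $\max_{i\in T}$ with a coarser estimate, yielding the same proof skeleton with marginally looser constants.
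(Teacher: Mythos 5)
Your proposal follows essentially the same route as the paper's proof: the same decomposition $\rr_{k+1}=A\bt_k+\z_k$ with $\bt_k=(I-A_{\Om_k}^{\dag}A)\x$, the same triangle-inequality reduction to a noise-free gap of $2E(\z_k)$, and the same pair of bounds (Lemma \ref{lemcw} with the $\ell_\infty/\ell_2$ inequality for the lower bound, the RIP cross-term estimate combined with Lemma \ref{lem1} for the upper bound). Your two side remarks are accurate but do not change the argument: the paper likewise relies implicitly on $\Om_k\subset supp(\x)$ when asserting that $\bt_k$ is supported on $\Om$, and its denominator $(1-\DK)^2-\DK(1+\sqrt{K-k})$ is just a harmless weakening of the $(1-\DK)^2-\DK\sqrt{K-k}$ that the computation actually produces.
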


\begin{proof}
For a given $K$-sparse signal $\x$,  denote the support of $\x$ by
$\Om$.  Consider the $(k+1)$-th iteration,
 \ba
  \rr_{k+1}
  &=& \y - A_{\Om_k} \x_k \\
  &=& A\x+\z - A_{\Om_k} A^{\dag}_{\Om_k}(A\x+\z)\\
  &=& A(I-A^{\dag}_{\Om_k}A)\x + (I-A_{\Om_k}A^{\dag}_{\Om_k})\z.
 \ea
For simplify, let $\bt_k = (I-A^{\dag}_{\Om_k}A)\x$.  Then we get
 \ba
 \langle Ae_i, \rr_{k+1} \rangle &=&
\langle Ae_i, \y - A_{\Om_k} \x_k \rangle \\
&=& \langle Ae_i, A\bt_k + \z_k \rangle\\
&=&S_i(\bt_k)+ \langle Ae_i , \z_k\rangle.
 \ea
Note that the residual $\rr_k$ are orthogonal to all the selected
columns of $A$, so no index is selected twice. Thus, the sufficient
condition for choosing an index from $\Om\setminus\Om_k$ in the
$(k+1)$th iteration is
 \be\label{lemeq1}
S_{\Om\setminus\Om_k}(\bt_k) - E(\z_k)> |S_i(\bt_k) | + E(\z_k)
\quad \text{for all}\quad i\in \Om^c.
 \ee
In the rest of the proof, we shall give a sufficient condition for
(\ref{lemeq1}) holds.

Note the support of $\bt_k$ is a subset of $\Om$. By Lemma 2.1 in
\cite{C}, we have
 \be\label{lemeq2}
 |S_i(\bt_k)| = |\langle  Ae_i, A\bt_k  \rangle| \leq \delta_{K+1}
 \|\bt_k\|_2\quad
 \text{for all}\  i\in \Om^c.
 \ee
Combine (\ref{lem1eq}) and (\ref{lemeq2}) leads to
 \be\label{lemeq4}
 |S_i(\bt_k)| \leq  \frac{\delta_{K+1}}{1-\delta_{K+1}}\|\x_{\Om_k^c}\|_2,\quad
 \text{for all}\  i\in \Om^c.
 \ee
By Lemma \ref{lemcw}, we obtain
 \be\label{lemeq3}
 {S_{\Om\setminus\Om_k}(\bt_k)}
 \geq
 \frac{\| A^*_{\Om\setminus \Om_k} A \bt_k \|_2}{\sqrt{K-k}}
 \geq
 \frac{ (1-\delta_K)\|\x_{\Om^c_k}\|_2}{\sqrt{K-k}}.
 \ee
It follows from  (\ref{lemeq4}) and (\ref{lemeq3}) that the
sufficient condition for (\ref{lemeq1}) holds is
$$
\frac{ (1-\delta_K)\|\x_{\Om^c_k}\|_2}{\sqrt{K-k}}-
\frac{\delta_{K+1}}{1-\delta_{K+1}}\|\x_{\Om_k^c}\|_2 > 2E(\z_k)
$$
which is simplified to (\ref{result1}).
\end{proof}

\begin{thm}\label{thm2}
  Suppose $\|\z\|_2< B_2$ and $\DK<\frac{1}{\sqrt{K}+3}$. Then OMP with the stopping rule $\|\rr_k\|_2\leq
  B_2$ finds the support of $\x$ if all the nonzero coefficients $x_i$ satisfy
 \be\label{3eq1}
|x_i|
 >
 \frac{2(1-\DK)B_2}{(1-\DK)^2-\DK(1+\sqrt{K})}.
  \ee
\end{thm}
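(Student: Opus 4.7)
The plan is to induct on $k$ and verify at each iteration that the hypothesis of Theorem~\ref{main} is met; once this is done, a separate check on the stopping rule shows that $\Om_k$ coincides with $\Om:=\mathrm{supp}(\x)$ precisely when OMP halts. The inductive hypothesis is $\Om_k\subseteq \Om$ with $|\Om_k|=k$, the base case $\Om_0=\emptyset$ being trivial. Throughout, I take $|\Om|=K$ (the subcritical case $|\Om|<K$ is easier because the required lower bound on $\|\x_{\Om_k^c}\|_2$ is automatically met with room to spare).

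For the inductive step, two estimates are needed in order to invoke Theorem~\ref{main}. First, since $I-A_{\Om_k}A_{\Om_k}^{\dag}$ is an orthogonal projection and each column $A_i$ has unit norm, Cauchy--Schwarz yields $E(\z_k)\le \|\z_k\|_2\le \|\z\|_2<B_2$. Second, the induction hypothesis implies $\x_{\Om_k^c}$ has $K-k$ nonzero entries, each of magnitude at least $\min_{i\in\Om}|x_i|$, so $\|\x_{\Om_k^c}\|_2\ge \sqrt{K-k}\,\min_{i\in\Om}|x_i|$. Substituting~(\ref{3eq1}) gives the uniform lower bound
\[
\|\x_{\Om_k^c}\|_2 \;>\; \sqrt{K-k}\cdot\frac{2(1-\DK)B_2}{(1-\DK)^2-\DK(1+\sqrt{K})}.
\]
A short monotonicity check shows that the right-hand side of~(\ref{result1}) only increases when $\sqrt{K-k}$ is replaced by the larger $\sqrt{K}$ (the numerator grows and the denominator shrinks since $\DK(1+\sqrt{\cdot})$ is larger); combined with $E(\z_k)\le B_2$, this means the displayed estimate is strictly stronger than~(\ref{result1}). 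Theorem~\ref{main} then guarantees the $(k+1)$-st iteration picks an index of $\Om$, advancing the induction.

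For the stopping rule, write $\rr_{k+1}=(I-A_{\Om_k}A_{\Om_k}^{\dag})(A\x+\z)$ and note that the projector kills the $\Om_k$-supported piece of $A\x$. Lemma~\ref{lemcw}(2) gives $\|(I-A_{\Om_k}A_{\Om_k}^{\dag})A\x\|_2\ge (1-\DK)\|\x_{\Om_k^c}\|_2$, so by the reverse triangle inequality
\[
\|\rr_{k+1}\|_2 \;\ge\; (1-\DK)\|\x_{\Om_k^c}\|_2-\|(I-A_{\Om_k}A_{\Om_k}^{\dag})\z\|_2 \;\ge\; (1-\DK)\|\x_{\Om_k^c}\|_2-B_2.
\]
While $\Om_k\subsetneq\Om$, we have $\|\x_{\Om_k^c}\|_2\ge \min_{i\in\Om}|x_i|$, and a direct comparison of denominators shows that~(\ref{3eq1}) is strictly stronger than the simpler bound $|x_i|>2B_2/(1-\DK)$, forcing $\|\rr_{k+1}\|_2>B_2$: the algorithm does not halt prematurely. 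Once $\Om_k=\Om$, however, $\rr_{k+1}=(I-A_\Om A_\Om^{\dag})\z$ has norm at most $\|\z\|_2<B_2$, so the stopping rule fires exactly at $\Om_k=\Om$, and $\hat\x$ is supported on $\Om$.

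The main obstacle I anticipate is the bookkeeping in the inductive step: the right-hand side of~(\ref{result1}) depends on $k$ through $\sqrt{K-k}$ in both the numerator and the denominator, so the fact that the single iteration-independent condition~(\ref{3eq1}) suffices to drive the entire recursion rests on the favorable monotonicity in $k$. Everything else is a routine combination of Theorem~\ref{main}, Lemma~\ref{lemcw}, and the contractivity of orthogonal projections.
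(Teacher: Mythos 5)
Your proposal is correct and follows essentially the same route as the paper: bound $E(\z_k)\le B_2$ by contractivity of the projector, reduce (\ref{result1}) to the iteration-independent condition (\ref{3eq1}) via $\|\x_{\Om_k^c}\|_2\ge\sqrt{K-k}\min_i|x_i|$ and the monotonicity of the bound in $k$, and rule out premature halting with Lemma \ref{lemcw}(2). You are somewhat more careful than the paper in making the induction explicit and in verifying that the stopping rule does fire once $\Om_k=\Om$, but the substance is identical.
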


\begin{proof}
We first estimate the $E(\z_k)$. Since $\|\z\|_2\leq B_2$, we have
 $$
 |\langle  Ae_i, \z_k\rangle| \leq
 \|A_i\|_2 \| (I - A_{\Om_k} A^{\dag}_{\Om_k}) \z\|_2\leq
 \|\z\|_2\leq B_2.
 $$
This implies
$$
E(\z_k)\leq B_2.
$$
Hence, (\ref{result1}) is followed by
 \be\label{xeq}
 \|\x_{\Om_k^c}\|_2
 >
\frac{2(1-\DK)B_2\sqrt{K-k}}{(1-\DK)^2-\DK(1+\sqrt{K-k})}.
 \ee
Since
$$
\frac{\|\x_{\Om_k^c}\|_2}{\sqrt{K-k}} \geq \min_{i\in\Om_k^C}|x_i|,
$$
(\ref{xeq})  is implied by
$$
|x_i|
 >
 \frac{2(1-\DK)B_2}{(1-\DK)^2-\DK(1+\sqrt{K})},\quad \text{for all}\
 i \in\Om^c_k.
$$

Now we prove that the OMP do not stop for some $j+1<k$. Consider the
$j+1$ iteration for some $j+1<k$, by Lemma \ref{lemcw}, we have
 \bea\label{thmeq1}
 \| \rr_{j+1} \|_2
 & = & \| \y - A_{\Om_j} \x_j \|_2 \nonumber\\
 & = & \| A(I-A^{\dag}_{\Om_j}A)\x +
 (I-A_{\Om_j}A^{\dag}_{\Om_j})\z\|_2 \nonumber\\
 & \geq &
 \| A(I-A^{\dag}_{\Om_j}A)\x \|_2 - \|(I-A_{\Om_j}A^{\dag}_{\Om_j})\z\|_2 \nonumber\\
 & > &
 (1-\DK)\| \x_{\Om^c_j} \|_2 - B_2.
 \eea
By  (\ref{3eq1}), we obtain
 \be\label{thmeq2}
 (1-\DK)\| \x_{\Om^c_j}\|_2 \geq
 \frac{2(1-\DK)^2B_2}{(1-\DK)^2-\DK(1+\sqrt{K})}\geq 2B_2.
 \ee
It follows from (\ref{thmeq1}) and (\ref{thmeq2}) that $\| \rr_{j+1}
\|_2 > B_2$.
\end{proof}


We assume that $\z$ is zero-mean white Gaussian noise with
covariance $\sigma^2 I_{m\times m}$. Cai, Xu and Zhang have show
that  $z\sim N(0,\sigma^2I_{m\times m})$ satisfies
$$
P(\z\in B_2) \geq 1- 1/m
$$
where $B_2 = \left\{  \z: \|z\|_2 \leq \sigma \sqrt{m+2\sqrt{m\log
m}} \right\}$. With this argument and Theorem \ref{thm2}, we obtain
the following result.

\begin{thm}
Suppose $z\sim N(0,\sigma^2I_{m\times m})$,
$\DK<\frac{1}{\sqrt{K}+3}$ and  nonzero coefficients $x_i$ satisfy
 $$
 |x_i|
 >
 \frac{2(1-\DK)\sigma \sqrt{m+2\sqrt{m\log
 m}}}{(1-\DK)^2-\DK(1+\sqrt{K})}.
 $$
Then OMP with the stopping rule $\|\rr_k\|_2\leq \sigma
\sqrt{m+2\sqrt{m\log m}}$ finds the support of $\x$ with probability
at least $1-1/m$.
\end{thm}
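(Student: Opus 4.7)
The plan is to treat this as a direct corollary of Theorem \ref{thm2}, combined with the Gaussian tail bound attributed to Cai, Xu and Zhang that is recalled immediately before the statement. The key observation is that Theorem \ref{thm2} is a \emph{deterministic} recovery guarantee conditional on $\|\z\|_2 \leq B_2$; the only role of the Gaussian assumption is to supply a high-probability choice of $B_2$.

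First I would define the event
$$
\mathcal{E} := \left\{ \|\z\|_2 \leq \sigma\sqrt{m+2\sqrt{m\log m}}\,\right\},
$$
and quote the cited tail bound, which gives $\mathbb{P}(\mathcal{E}) \geq 1 - 1/m$ for $\z \sim N(0,\sigma^2 I_{m\times m})$. Then I would set $B_2 := \sigma\sqrt{m+2\sqrt{m\log m}}$ so that, on $\mathcal{E}$, we are exactly in the deterministic setting of Theorem \ref{thm2}: the noise is $\ell_2$-bounded by $B_2$, the RIP hypothesis $\DK < 1/(\sqrt{K}+3)$ holds by assumption, and the coefficient lower bound assumed in the statement is precisely the bound (\ref{3eq1}) with this choice of $B_2$.

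Applying Theorem \ref{thm2} pointwise on $\mathcal{E}$ then yields that OMP with the stopping rule $\|\rr_k\|_2 \leq B_2$ recovers the support of $\x$ whenever $\mathcal{E}$ occurs. Hence the probability of correct support recovery is at least $\mathbb{P}(\mathcal{E}) \geq 1 - 1/m$, which is the claim.

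There is no real obstacle here; the entire content of the argument is the reduction from a probabilistic noise model to the deterministic $\ell_2$-bounded noise setting already analyzed. The only point worth being careful about is that the stopping threshold $\sigma\sqrt{m+2\sqrt{m\log m}}$ must be chosen to match the $B_2$ used in the tail bound, so that the hypothesis $\|\rr_k\|_2 \leq B_2$ of Theorem \ref{thm2} is the same stopping criterion stated in the theorem; otherwise the deterministic guarantee cannot be invoked verbatim.
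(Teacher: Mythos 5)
Your proposal is correct and matches the paper's (implicit) argument exactly: the paper also obtains this theorem by combining the quoted Gaussian tail bound $P(\|\z\|_2 \leq \sigma\sqrt{m+2\sqrt{m\log m}}) \geq 1-1/m$ with Theorem \ref{thm2} applied with $B_2 = \sigma\sqrt{m+2\sqrt{m\log m}}$. Nothing further is needed.
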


Now we give the RIP-based sufficient conditions for OMP with
$l_\infty$ bounded noise case. Then the stopping rule is $\|
A^*\rr_k\|_\infty\leq B_\infty$.
\begin{thm}
  Suppose $\|A^*\z\|_\infty< B_\infty$ and $\DK<\frac{1}{\sqrt{K}+3}$. Then OMP with the stopping rule
  $\|A^*\rr\|_\infty< B_\infty$ finds the support of $\x$ if all the nonzero coefficients $x_i$
satisfy
 \be\label{4eq1}
|x_i|
 >
 \frac{2(1-\DK)B_{\infty}}{(1-\DK)^2-\DK(1+\sqrt{K})}\left( 1+ \frac{\sqrt{K}}{\sqrt{1-\DK}} \right).
  \ee
\end{thm}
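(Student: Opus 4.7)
The plan is to mirror the proof of Theorem~\ref{thm2}, substituting the dual hypothesis $\|A^{*}\z\|_\infty\le B_\infty$ for the $\ell_2$ noise bound. Two ingredients must be re-derived: the estimate of $E(\z_k)$, so that Theorem~\ref{main} again guarantees a correct selection, and the verification that the stopping criterion $\|A^{*}\rr_k\|_\infty\le B_\infty$ is not triggered prematurely.

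For the first ingredient, split $\z_k=\z-A_{\Om_k}A^{\dag}_{\Om_k}\z$. The triangle inequality together with Cauchy--Schwarz and $\|A_i\|_2=1$ gives
\[
|\langle Ae_i,\z_k\rangle|\le|\langle Ae_i,\z\rangle|+|\langle Ae_i,A_{\Om_k}A^{\dag}_{\Om_k}\z\rangle|\le B_\infty+\|A_{\Om_k}A^{\dag}_{\Om_k}\z\|_2.
\]
Because $A_{\Om_k}A^{\dag}_{\Om_k}\z$ is the orthogonal projection of $\z$ onto the column span of $A_{\Om_k}$,
\[
\|A_{\Om_k}A^{\dag}_{\Om_k}\z\|_2^{2}=(A_{\Om_k}^{*}\z)^{*}(A_{\Om_k}^{*}A_{\Om_k})^{-1}(A_{\Om_k}^{*}\z)\le\frac{\|A_{\Om_k}^{*}\z\|_2^{2}}{1-\DK}
\]
by Lemma~\ref{lemtn}, and the hypothesis on $A^{*}\z$ yields $\|A_{\Om_k}^{*}\z\|_2\le\sqrt{|\Om_k|}\,\|A^{*}\z\|_\infty\le\sqrt{K}\,B_\infty$; hence
\[
E(\z_k)\le B_\infty\Bigl(1+\tfrac{\sqrt{K}}{\sqrt{1-\DK}}\Bigr).
\]
I would then argue by induction on $k$ that $\Om_k\subseteq\Om$: under this hypothesis $\|\x_{\Om_k^c}\|_2\ge\sqrt{K-k}\min_{i}|x_i|$, and (\ref{4eq1}) combined with $\sqrt{K-k}\le\sqrt{K}$ in the denominator of Theorem~\ref{main} implies (\ref{result1}); hence the $(k+1)$th iteration selects a new index of $\Om$.

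It remains to check that the algorithm does not halt before iteration $K$. For $j<K$ with $\Om_j\subseteq\Om$, combining (\ref{lemeq3}) with the bound on $E(\z_j)$ yields
\[
\|A^{*}\rr_{j+1}\|_\infty\ge S_{\Om\setminus\Om_j}(\bt_j)-E(\z_j)\ge(1-\delta_K)\min_{i}|x_i|-B_\infty\Bigl(1+\tfrac{\sqrt{K}}{\sqrt{1-\DK}}\Bigr),
\]
and a short algebraic check (using $\delta_K\le\DK$ and the fact that the denominator $(1-\DK)^{2}-\DK(1+\sqrt{K})$ appearing in (\ref{4eq1}) is strictly less than $(1-\DK)^{2}$) shows that (\ref{4eq1}) forces the right-hand side strictly above $B_\infty$. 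The criterion is therefore not triggered, the induction closes, and $\Om_K=\Om$. The one delicate step is the projection estimate above: the naive Cauchy--Schwarz bound $\|A_i\|_2\|\z\|_2$ is useless here because $\|\z\|_2$ is not controlled in the $\ell_\infty$ noise model, and one must funnel the argument through $A_{\Om_k}^{*}\z$, for which the hypothesis does apply; this is exactly what produces the factor $\sqrt{K}/\sqrt{1-\DK}$ appearing in (\ref{4eq1}).
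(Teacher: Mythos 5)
Your proposal is correct and follows essentially the same route as the paper's (admittedly sketchy) proof: bound $E(\z_k)$ by $B_\infty\bigl(1+\sqrt{K}/\sqrt{1-\DK}\bigr)$ via the orthogonal-projection identity and the RIP, feed this into Theorem \ref{main} to guarantee correct selections, and use the lower bound (\ref{lemeq3}) to show $\|A^*\rr_{j+1}\|_\infty>B_\infty$ so the algorithm does not halt early. The only difference is that you spell out the estimate $\|A_{\Om_k}A^{\dag}_{\Om_k}\z\|_2\le\sqrt{K}B_\infty/\sqrt{1-\DK}$, which the paper simply imports from Cai and Wang.
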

\begin{proof}

Since the proof is similar as the proof of Theorem 4 in  \cite{CW}.
We include a sketch for the completeness. To make sure
(\ref{result1}) of Theorem \ref{main} hold, we first give an
estimation of $E(z_k)$ in the $(k+1)$-th iteration. We have
$$
|\langle Ae_i, \z_k \rangle| \leq |A^*_i \z| + |\langle
Ae_i,A_{\Om_k}A^{\dag}_{\Om_k}\z \rangle | \leq B_{\infty}\left(1+
\frac{\sqrt{k}}{\sqrt{1-\delta_{K+1}}}\right).
$$
Together with (\ref{4eq1}), it implies that (\ref{result1}) holds.
Now consider the $t$-th iteration with $t<k+1$. We obtain
 \ba
 \|A^* z_t\|_{\infty} &\geq&
 \frac{1-\delta_{K+1}}{\sqrt{K-t}}\|\x_{\Om_t}\|_2 -
 \left( 1+ \frac{\sqrt{t}}{\sqrt{1-\DK}}\right)B_{\infty}\\
 &\geq& 2\left( 1+ \frac{\sqrt{K}}{\sqrt{1-\DK}}\right)B_{\infty}
 - \left( 1+ \frac{\sqrt{t}}{\sqrt{1-\DK}}\right)B_{\infty} \\
 &\geq& B_{\infty}.
 \ea
The second inequality is implied by (\ref{4eq1}). Therefore, the OMP
does not stop after $t$-th iteration.

\end{proof}

\end{document}